\begin{document}

\title{Fluctuation of free energy for the mean-field Ghatak-Sherrington Model}
%\author[Dey]{Partha S.~Dey$^\star$}
\author{
    Yueqi Sheng\thanks{School of Engineering \& Applied Sciences, Harvard University, Cambridge, Massachusetts, USA.
    \newline Email: \textcolor{red}{ysheng@g.harvard.edu}.}
}
%\address{School of Mathematics, University of Minnesota, 127 Vincent Hall 206 Church St. SE Minneapolis, MN 55455}
%\email{$^\dagger$qiangw132@gmail.com}
\date{}

% \subjclass[2020]{Primary: 82B26, 82B44, 60F05.}
% \keywords{Spin glass, GS Model}
%\thanks{}
\maketitle
%%%%%%%%%%%%%%%%%%%%%%%%%%%%%%%%%%%%%%%%%%%%%%%%%%%%%%%%%%%%%%%%%%%%%%%%%
\begin{abstract}
	In this short note, we prove a central limit theorem for the free energy in the Ghatak-Sherrington model at high enough temperatures, based on a generalization of the stochastic calculus method for the SK model derived in \cite{comets95, Tindel_2005}.
\end{abstract}

%%%%%%%%%%%%%%%%%%%%%%%%%%%%%%%%%%%%%%%%%%%%%%%%%%%%%%%%%%%%%%%%%%%%%%%%%

\section{Introduction}
The Ghatak-Sherrington Model is defined as follows: the spin configurations are in 
\[\Sigma_{S, N} = \{ 0, \pm 1, \cdots, \pm S\}^N\]
The Hamiltonian is given by 
\[H_{N}(\sigma) = \frac{\beta}{\sqrt{N}} \sum_{i < j}g_{i, j}\sigma_i\sigma_j + D\sum_i \sigma_i^2 + h \sum_i \sigma_i\]
where $\beta>0$ is the inverse temperature, and $h\ge0$ and $D\in \mathbb{R}$ represent the \textit{external} and \textit{crystal} fields respectively. The corresponding Gibbs measure is then defined as $\frac{\exp(H_N(\sigma))}{Z_N}$, where $Z_N$ is the partition function.  The free energy is 
\[\frac{1}{N}\log Z_N = \frac{1}{N}\log \left(\sum_\sigma \exp(H_N(\sigma))\right)\]
The stochastic calculus method has been used to derive fluctuation results of free energy for the SK model ~\cite{comets95, Tindel_2005} (See also \cite{Talagrand2011}). The main idea of the proof is to view the interpolation in the cavity method as a combination of two Brownian motions running in opposite directions. A CLT for free energy could then be obtained by applying Ito's lemma. For the GS model, similar methods bring out terms of the form $U_{k, l}$ from the quadratic variation of the cavity process. This is because self-overlap is no longer trivial. In this note, we show that an analogous result for the GS model holds despite having non-trivial self-overlap. The main result is the following central limit theorem of the free energy.
\begin{theorem} \label{thm: main}
There exists $\beta'$ s.t for $\beta \in [0, \beta']$ s.t.
    \[\sqrt{N}\left(\frac{1}{N} \log Z_N - \frac{1}{N}\E[\log Z_0] - \frac{\beta^2}{4}(q^2 - p^2)\right)\]
    converges to $\cc{N}(0, \nu)$ as $N \to \infty$, where $p, q$ are constants defined in Proposition \ref{prop:ac21}, and the variance is given by
    \[\nu^2 = \left(\Var(\log W) - \frac{\beta^2 q^2}{2}\right)\]
    where $W = \sum_{\gamma = -S}^S \exp((\beta \sqrt{q}\eta + h)\gamma + \gamma^2 \left(\frac{\beta^2}{2}(p - q) + D\right))$ and $\eta \sim \cc{N}(0, 1)$
\end{theorem}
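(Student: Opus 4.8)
The plan is to make the Gaussian disorder dynamical and reduce the statement to a martingale central limit theorem, following the stochastic-calculus scheme of \cite{comets95, Tindel_2005} while carrying along the diagonal ($\sigma_i^2$) terms that the $\{0,\pm1,\dots,\pm S\}$ spins generate. First I would realize each coupling as a Brownian motion, $g_{ij}=B_{ij}(1)$, introduce an independent family of cavity Brownian motions $W_i$ with $W_i(q)=\sqrt q\,\eta_i$, and interpolate by running the interaction clock forward and the cavity clock backward:
\[
H_t(\sigma)=\frac{\beta}{\sqrt N}\sum_{i<j}B_{ij}(t)\sigma_i\sigma_j+\beta\sum_i W_i\!\big(q(1-t)\big)\sigma_i+\frac{\beta^2}{2}(p-q)(1-t)\sum_i\sigma_i^2+D\sum_i\sigma_i^2+h\sum_i\sigma_i .
\]
Writing $\phi(t)=\log Z_N(t)$, the endpoint $\phi(1)=\log Z_N$ is the true free energy, while $\phi(0)=\log Z_0=\sum_i\log W_i$ is exactly the decoupled single-site system in the statement. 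The $\sigma_i^2$-correction is the Ghatak--Sherrington-specific ingredient: in the SK case $\sigma_i^2\equiv1$ makes it a harmless constant, whereas here its coefficient must be tuned (to $\frac{\beta^2}{2}(p-q)$, obtained by matching exponents) so that the single-site measure at $t=0$ is the one defining $W$.

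Next I would apply Itô's formula to $t\mapsto\phi(t)$, treating $B_{ij}$ by forward and $W_i(q(1-\cdot))$ by backward Itô calculus --- the ``two Brownian motions running in opposite directions.'' The forward and backward second-order terms come with opposite signs, and together with the $t$-derivative of the explicit correction they combine into a drift built from the Gibbs correlations $\langle\sigma_i\sigma_j\rangle_t^2$, $\langle\sigma_i^2\sigma_j^2\rangle_t$ and the self-overlap objects $U_{k,l}$, plus a martingale differential driven by $\langle\sigma_i\sigma_j\rangle_t$ and $\langle\sigma_i\rangle_t$. Using the high-temperature concentration of the overlap $R_{12}$ about $q$ and of the self-overlap $\frac1N\sum_i\sigma_i^2$ about $p$ supplied by Proposition \ref{prop:ac21}, I would show the normalized drift converges to the centering constant $\frac{\beta^2}{4}(q^2-p^2)$; this reduces the deterministic part of the theorem to the error estimate $\frac1N\big(\E\log Z_N-\E\log Z_0\big)-\frac{\beta^2}{4}(q^2-p^2)=o(N^{-1/2})$.

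For the fluctuations I would pass to the martingale $M_t=\E[\log Z_N\mid\cc{F}_t]$ with respect to the filtration generated by both Brownian families, so that $M_1-M_0=\log Z_N-\E\log Z_N$ and its martingale part agrees with that of $\phi$. Since $M$ is continuous, the Lindeberg condition is automatic and Dambis--Dubins--Schwarz reduces everything to showing $\frac1N\langle M\rangle_1\to\nu^2$ in probability. The quadratic variation splits into a cavity piece, which by the Itô (Clark--Ocone) representation of $\log W_i$ integrates to $\Var(\log W)$, and an interaction piece that partially cancels it and leaves the correction $-\frac{\beta^2q^2}{2}$; combined with the vanishing of the deterministic error above, the CLT then follows.

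The hard part will be the quadratic variation in the Ghatak--Sherrington setting. Because the self-overlap is non-trivial, $\langle M\rangle_1$ genuinely contains the $U_{k,l}$ terms, i.e.\ multi-replica Gibbs correlations of $\sigma_i^2$ and $\sigma_i\sigma_j$ that have no SK analogue; I must show these factorize and concentrate, and that their limit collapses to the closed form $\Var(\log W)-\frac{\beta^2q^2}{2}$. This is precisely where the restriction $\beta\le\beta'$ enters, through the high-temperature overlap estimates of Proposition \ref{prop:ac21}, and I expect matching the limiting quadratic variation to this single-site variance to be the most delicate step.
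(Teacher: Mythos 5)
Your interpolation, the tuning of the $\sigma_i^2$ coefficient to $\frac{\beta^2}{2}(p-q)$, and the treatment of the drift via the overlap concentration of Proposition \ref{prop:ac21} all match the paper's argument. The gap is in the fluctuation step. You propose to obtain the CLT from $\frac1N\langle M\rangle_1\to\nu^2$ for a single martingale, with the variance $\nu^2=\Var(\log W)-\frac{\beta^2q^2}{2}$ produced by ``an interaction piece that partially cancels'' the cavity piece. This accounting cannot work: the families $\{B_{ij}\}$ and $\{B_i\}$ are independent, so their contributions to any quadratic variation add with positive signs, and there is no cancellation between them. Concretely, the martingale part of $\log Z_t-\log Z_0$ has normalized quadratic variation $\approx\frac{\beta^2q^2}{2}+\beta^2q^2=\frac{3\beta^2q^2}{2}$ (both pieces positive), while $F_0$ contributes $\Var(\log W)$; to land on $\Var(\log W)-\frac{\beta^2q^2}{2}$ you need an additional $-2\beta^2q^2$, and its only possible source is the covariance between $\log Z_0$ --- a function of $\eta_i=W_i[0]=B_i[1]$ --- and the backward cavity integral $\int\beta\sqrt q\sum_i\langle\sigma_i\rangle_t\,dW_i[t]$, which is driven by the same $B_i$'s. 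Your proposal never isolates this cross term; the passing mention of Clark--Ocone points at the right object but then books $\Var(\log W)$ as though the time-zero fluctuation were independent of the rest of the martingale.

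In the paper this covariance is extracted automatically by working with the characteristic function: It\^o applied to $\exp(iuF_t)$ produces the term $\langle\sigma_i W_i[t]/(1-t)\rangle_t$, and Gaussian integration by parts in $W_i[t]$ splits it into a piece cancelling the drift $\beta^2qN\langle R_{1,1}-R_{1,2}\rangle_t$ and the crucial cross term $\partial_{W_i[t]}\bigl(\exp(iuF_t)\bigr)\cdot\langle\sigma_i\rangle_t$, which evaluates to $-u^2\beta^2q\langle R_{1,2}\rangle_t\approx-u^2\beta^2q^2$ inside the expectation; combined with $-\frac{u^2}{2}d[F]_t$ this leaves $+\frac{u^2\beta^2q^2}{4}$ per unit time, i.e.\ exactly the $-\frac{\beta^2q^2}{2}$ correction to the variance. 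A related inconsistency: you claim $M_t=\E[\log Z_N\mid\cc{F}_t]$ satisfies $M_1-M_0=\log Z_N-\E\log Z_N$ and that its martingale part ``agrees with that of $\phi$''; but in the interpolation's filtration $\cc{F}_0$ already contains the $\eta_i$, so $\phi(0)=\log Z_0$ is random, $M_0=\E[\log Z_N\mid\cc{F}_0]$ is not deterministic, and the Doob martingale differs from the martingale part of $\phi$ by precisely the Clark--Ocone representation of $\log Z_0$ plus the compensated random drift. To repair the argument, either commit fully to the Doob-martingale/DDS route and compute that cross term explicitly, or follow the paper's characteristic-function computation, where the cancellation is built in.
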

An explicit formal for $\frac{1}{N} \log Z_0$ can be found in Lemma \ref{lem: Free energy 0}.
\section{Set up}
We first rewrite cavity interpolation as a stochastic process. Let $\{B_{i, j}[t]: 1 \leq i < j \leq N\}$ and $\{B_i[t]: 1 \leq i \leq N\}$ be two independent family of Brownian motion in $(\Omega, \cc{F}_t, P)$ and $\cc{F}_t$ measurable. 
\begin{lemma}[\cite{Tindel_2005, IW81}] \label{lem: time-rev} 
Let $W_i$ be the unique solution to the following SDE
    \[dW_i[t] = -\frac{W_i[t]}{1 - t}dt + dB_i[t] \quad W_i[0] \sim \cc{N}(0, 1)\]
    Then $W_i[t] = B_i[1 - t]$ can be seen as a time reversed BM for $t \in [0, 1]$. 
\end{lemma}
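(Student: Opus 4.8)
The plan is to identify the law of the SDE solution explicitly and match it against that of a time-reversed Brownian motion, using that every object involved is Gaussian. Since the drift coefficient $-\frac{1}{1-t}$ is deterministic and integrable on each interval $[0,t]$ with $t<1$, the equation is linear in the narrow sense; standard existence and uniqueness theory for linear SDEs (e.g.\ \cite{IW81}) gives a unique strong solution on $[0,1)$, and linearity with Gaussian data forces the solution to be a centered Gaussian process. It therefore suffices to compute its covariance function.

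First I would solve the equation via the integrating factor $\mu(t) = (1-t)^{-1}$. Since $d\big(\mu(t) W_i[t]\big) = \mu(t)\,dB_i[t]$, integrating from $0$ yields the explicit representation
\[
    W_i[t] = (1-t)\,W_i[0] + (1-t)\int_0^t \frac{1}{1-s}\,dB_i[s].
\]
From this I read off that $W_i$ is centered. For $s\le t$, using $\Var(W_i[0])=1$, the independence of $W_i[0]$ and $B_i$, and the It\^o isometry $\mathrm{Cov}\big(\int_0^s (1-u)^{-1}dB_i,\ \int_0^t (1-u)^{-1}dB_i\big) = \int_0^s (1-u)^{-2}\,du = \frac{s}{1-s}$, the two contributions combine to
\[
    \mathrm{Cov}\big(W_i[s],W_i[t]\big) = (1-s)(1-t)\Big(1 + \frac{s}{1-s}\Big) = 1 - t = 1 - \max(s,t).
\]
In particular $\Var(W_i[t]) = 1-t$, so $W_i[t]\to 0$ as $t\uparrow 1$.

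I would then compare with the time-reversed Brownian motion $t\mapsto B_i[1-t]$ (for a standard Brownian motion on $[0,1]$ started at the origin), which is also a centered Gaussian process, has initial value $B_i[1]\sim\cc{N}(0,1)$, and has covariance $\mathrm{Cov}\big(B_i[1-s],B_i[1-t]\big) = \min(1-s,1-t) = 1-\max(s,t)$. Two centered Gaussian processes with identical covariance functions have identical finite-dimensional distributions, so the laws of $\{W_i[t]\}$ and $\{B_i[1-t]\}$ on $[0,1]$ coincide, which is the claimed identification. Conceptually this is an instance of the Haussmann--Pardoux time-reversal formula: reversing $dY = dW$ adds the drift $\nabla\log p_{1-t}$, and since $p_s$ is the $\cc{N}(0,s)$ density, $\nabla\log p_{1-t}(y) = -\frac{y}{1-t}$, which reproduces exactly the drift of the stated SDE.

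The one genuine subtlety is the singularity of the drift at $t=1$. The integral $\int_0^t (1-s)^{-1}dB_i[s]$ has variance $\frac{t}{1-t}$, which blows up as $t\uparrow 1$, so the solution must be kept under control through the vanishing prefactor $(1-t)$; the covariance computation already shows the product stays bounded, with $\Var(W_i[t]) = 1-t \to 0$, so $W_i$ extends continuously to $t=1$ with $W_i[1]=0$, matching $B_i[0]=0$. Because only the law of the process feeds into the subsequent cavity and It\^o arguments, matching the Gaussian finite-dimensional distributions is enough and no pathwise reconstruction of the driving noise is required.
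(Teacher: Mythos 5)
The paper does not actually prove this lemma --- it is stated as an imported result with citations to \cite{Tindel_2005, IW81} and no proof environment follows it --- so there is nothing internal to compare against; what you have written is a self-contained substitute for the citation, and it is correct. Your computation checks out: the integrating factor $(1-t)^{-1}$ gives $W_i[t]=(1-t)W_i[0]+(1-t)\int_0^t(1-s)^{-1}dB_i[s]$, the It\^o isometry yields $\int_0^s(1-u)^{-2}du=\tfrac{s}{1-s}$, and hence $\mathrm{Cov}(W_i[s],W_i[t])=1-\max(s,t)$, which is exactly the covariance of $t\mapsto B_i[1-t]$ for a standard Brownian motion issued from the origin; since both processes are centered Gaussian, equality of covariances gives equality of laws. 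You are also right to flag the two genuine subtleties: the identity $W_i[t]=B_i[1-t]$ can only be meant in distribution (the driving noise of the SDE is not the reversed path's own increments unless one invokes the Haussmann--Pardoux construction, which you correctly identify as the conceptual source of the drift $-y/(1-t)=\nabla\log p_{1-t}(y)$), and the drift singularity at $t=1$ is tamed by the prefactor $1-t$, with $\Var(W_i[t])=1-t\to 0$ matching $B_i[0]=0$. One small remark worth making if you fold this into the paper: your computation shows $\Var(W_i[t]/(1-t))=(1-t)^{-1}$, not $1$, so the parenthetical claim ``$\tfrac{W_i[t]}{1-t}\sim\cc{N}(0,1)$'' made later in the proof of Lemma \ref{lem: dlog} is off (only $W_i[0]$ is standard normal); this does not affect the Gaussian integration by parts identity itself, which depends on covariances rather than on that normalization, but the statement should be corrected.
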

\textit{In the remainder of this note, we write $H_t$ as $H_{N, t}$.}

To describe the interpolation, recall that the overlap between two copies of spin configuration $\sigma^k, \sigma^l$ is defined as
\[R_{k, l} = \frac{1}{N}\inner{\sigma^k, \sigma^l}\]
At sufficiently high temperatures, it is expected that the overlap and self-overlap concentrate on some fixed points respectively. The explicit form of the system of equations and the following concentration results were given in ~\cite{AC21}.
% \YS{Done. simplify this prop and state the explicit formula for p and q is known and can be found in AC?}
\begin{proposition} [{\cite[Proposition 2]{AC21}}] \label{prop:ac21}
    There exist a $\tilde{\beta} > 0$ s.t for $\beta \in [0,  \tilde{\beta})$, $h \geq 0$ and $\cc{D} \in \R$, there exists unique $p, q \in \mathbf{R}$ s.t.
    % \[
    % \E [\langle (R_{1,2} -q)^2 \rangle] \le \frac{16\cS^2}{N}, \quad 
    % \E [\langle (R_{1,1} -p)^2 \rangle ]\le \frac{16\cS^4}{N}.
    % \]
    \[\nu((R_{1,2} -q)^2) \le \frac{16\cS^2}{N}, \quad 
    \nu((R_{1,1} -p)^2) \le \frac{16\cS^4}{N}.\]
% where \begin{align} \label{eq:p}
% p = \E\left[\frac{\sum_{\gamma =1}^{\cc{S}}\gamma^2 \cdot 2\cosh[\gamma(\sqrt{q}\beta \eta   + h)] \exp(\gamma^2 [\cc{D} + \frac{\beta^2}{2}(p - q)])}{1 + \sum_{\gamma =1}^{\cc{S}}\gamma^2\cdot  2\cosh[\gamma(\sqrt{q}\beta \eta +h)] \exp(\gamma^2 [\cc{D} + \frac{\beta^2}{2}(p - q)])}\right],
% \end{align}
% \begin{align}\label{eq:q}
% q = \E\left[\frac{\sum_{\gamma =1}^{\cc{S}}\gamma^2 \cdot 2\sinh[\gamma(\sqrt{q}\beta \eta +h)] \exp(\gamma^2 [\cc{D} + \frac{\beta^2}{2}(p - q)])}{1 + \sum_{\gamma =1}^{\cc{S}}\gamma^2 \cdot 2\cosh[\gamma(\sqrt{q}\beta \eta +h)] \exp(\gamma^2 [\cc{D} + \frac{\beta^2}{2}(p - q)])}\right]^2,
% \end{align}
% where $\eta \sim N(0,1)$ in the above.
\end{proposition}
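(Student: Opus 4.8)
The statement combines two assertions---existence and uniqueness of the replica-symmetric parameters $(p,q)$, and quantitative concentration of the overlap and self-overlap---and I would prove them in that order.

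\emph{Existence and uniqueness.} Following the structure already visible in Theorem \ref{thm: main}, I would introduce the effective single-spin partition function with a free pair $(p,q)$: writing $\mu=\frac{\beta^2}{2}(p-q)+D$, set $W_{p,q}(\eta)=\sum_{\gamma=-S}^{S}\exp\!\big((\beta\sqrt q\,\eta+h)\gamma+\mu\gamma^2\big)$, with the induced single-site averages $\langle\gamma\rangle$ and $\langle\gamma^2\rangle$. The replica-symmetric consistency equations are $p=\E_\eta\langle\gamma^2\rangle$ and $q=\E_\eta\langle\gamma\rangle^2$, i.e.\ $(p,q)$ is a fixed point of $T(p,q)=(\E_\eta\langle\gamma^2\rangle,\ \E_\eta\langle\gamma\rangle^2)$. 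Since $0\le\langle\gamma\rangle^2\le\langle\gamma^2\rangle\le\cS^2$, the map $T$ carries the compact convex set $\{(p,q):0\le q\le p\le\cS^2\}$ into itself. The dependence of $W_{p,q}$ on $(p,q)$ enters only through $\beta\sqrt q$ and $\frac{\beta^2}{2}(p-q)$, so differentiating and integrating by parts in $\eta$ bounds the Jacobian $\|DT\|$ by $K\beta$ uniformly on this set; choosing $\tilde\beta$ so that $K\tilde\beta<1$ makes $T$ a contraction, and Banach's theorem yields the unique $(p,q)$.

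\emph{Concentration.} For the two variance bounds I would run the cavity method along a smart-path interpolation $\nu_t$ between the full $N$-spin Gibbs measure and its decoupled last-spin version. Gaussian integration by parts in the couplings $g_{ij}$ gives the derivative identity $\frac{d}{dt}\nu_t(f)=\beta^2\sum_{k<l}\nu_t(f\,(R_{k,l}-\cdots))+\cdots$, and I would apply it with $f=(R_{1,2}-q)^2$ and $f=(R_{1,1}-p)^2$. Because the self-overlap is nontrivial, the integration by parts brings down not only overlap deviations $R_{k,l}-q$ but also self-overlap deviations $R_{k,k}-p$---exactly the $U_{k,l}$ terms noted in the introduction and absent in SK. A Cauchy--Schwarz estimate then closes the computation into a coupled pair of differential inequalities $U_q'(t)\le K\beta^2(U_q(t)+U_p(t))+C\cS^2/N$ and $U_p'(t)\le K\beta^2(U_q(t)+U_p(t))+C\cS^4/N$, where $U_q(t)=\nu_t((R_{1,2}-q)^2)$ and $U_p(t)=\nu_t((R_{1,1}-p)^2)$.

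\emph{Closing and the main obstacle.} At $t=0$ the sites are independent under the effective field, and the fixed-point property of $(p,q)$ makes $q$ and $p$ the exact centers, so $U_q(0)$ and $U_p(0)$ are already $O(1/N)$ by the law of large numbers for the uniformly bounded summands. Integrating the coupled inequalities in $t$ and shrinking $\tilde\beta$ further so that $K\beta^2<\frac{1}{2}$ renders the linear system strictly contractive; tracking the uniform spin bound $|\sigma_i|\le S$ through the estimates then produces the explicit constants $16\cS^2/N$ and $16\cS^4/N$. The principal difficulty, relative to the SK case, is precisely this coupling: with a nontrivial self-overlap one cannot isolate a single scalar inequality for $R_{1,2}$, so I must control the $2\times2$ system simultaneously and verify that the cross-terms feeding $R_{1,1}-p$ into the $R_{1,2}-q$ estimate (and vice versa) remain $O(\beta^2)$. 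A secondary technical point is the $1/\sqrt q$ singularity of $T$ near $q=0$, which I would dispose of by matching each single-$\eta$ term produced by the chain rule with a compensating $\sqrt q$ from Gaussian integration by parts; this is what ultimately fixes the admissible threshold $\tilde\beta$.
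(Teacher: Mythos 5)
The paper itself contains no proof of this statement: Proposition \ref{prop:ac21} is quoted directly from \cite[Proposition 2]{AC21}, so there is no in-paper argument to compare against, and your proposal must be judged against the standard argument in that reference. Your first half --- existence and uniqueness of $(p,q)$ as the fixed point of $T(p,q)=(\E\langle\gamma^2\rangle,\ \E\langle\gamma\rangle^2)$ on $\{0\le q\le p\le \cS^2\}$, via a Jacobian bound $\|DT\|\le K\beta$ and Banach's theorem, with the $\sqrt q$ singularity near $q=0$ handled by pairing chain-rule terms with Gaussian integration by parts --- is a faithful reconstruction of the standard high-temperature contraction proof, and I see no gap there.

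The concentration half, however, has a genuine gap in the closing step. You set up the interpolation ``between the full $N$-spin Gibbs measure and its decoupled last-spin version,'' but you close Gr\"onwall by asserting that at $t=0$ ``the sites are independent under the effective field,'' so that $U_q(0)$ and $U_p(0)$ are $O(1/N)$ by the law of large numbers. These two statements are incompatible. If the path decouples only the last spin (Talagrand's $\nu_t$), then at $t=0$ the remaining $N-1$ spins still carry the full interacting Gibbs measure, and $\nu_0((R_{1,2}-q)^2)$ is essentially the quantity you are trying to bound --- no law of large numbers applies. If instead you interpolate all the way down to the product measure (the $H_0$ of this paper), then the $t$-derivative of $\nu_t((R_{1,2}-q)^2)$ is \emph{not} of the claimed form $K\beta^2(U_q(t)+U_p(t))+C\cS^2/N$: Gaussian integration by parts along Guerra's path produces terms carrying an explicit factor of $N$, of the type $N\beta^2\,\nu_t\bigl((R_{1,2}-q)^2(R_{l,l'}-q)\bigr)$ summed over replica pairs, so a naive second-moment differential inequality does not close at scale $1/N$. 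The way such bounds are actually obtained (and the route to the explicit constants $16\cS^2$ and $16\cS^4$) is per-coordinate rather than by a differential inequality in $t$: use symmetry among sites to write $\nu((R_{1,2}-q)^2)=\nu\bigl((\sigma^1_N\sigma^2_N-q)(R_{1,2}-q)\bigr)+O(\cS^4/N)$, Taylor-expand $\nu$ around the one-spin-decoupled $\nu_0$ (where the cavity spin is genuinely independent), use the fixed-point equations for $(p,q)$ to cancel the leading term, and then solve the resulting coupled $2\times2$ system of \emph{algebraic} linear inequalities in the pair $\nu((R_{1,2}-q)^2)$, $\nu((R_{1,1}-p)^2)$. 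You correctly anticipate the coupling between overlap and self-overlap deviations that distinguishes the Ghatak--Sherrington model from SK, but your mechanism for closing that coupled system does not work as written.
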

For convenience, let us also define the analog for overlap on the square of spin configuration.
\[U_{k, l} = \frac{1}{N^2} \sum_i (\sigma^k_i)^2(\sigma^l_i)^2\]
The interpolated Hamiltonian can be defined as follows: 
Let $\{\eta_i\}$ be a family of independent Gaussian defined on the same probability space. Let $\eta_i = W_i[0]$
\begin{align} \label{eq:init Ham}
    H_0(\sigma) = \beta \sqrt{q}\sum_i \sigma_i \eta_i + \frac{\beta^2}{2} (p - q) \sum_i \sigma_i^2 + D \sum_i \sigma_i^2 + h\sum_i \sigma_i
\end{align}
\[dH_t(\sigma) = \frac{\beta}{\sqrt{N}} \sum_{i < j}\sigma_i\sigma_j dB_{i, j}[t] + \beta \sqrt{q} \sum_i \sigma_i dW_i[t] - \frac{\beta^2}{2} (p - q) \sum_i \sigma_i^2 dt\]
Observe that at time $1$, $H_t = H_N$. 

We denote $Z_t$ as the partition function corresponding to the Hamiltonian $H_t$, and $\inner{\cdot}_t$ as the Gibbs average at time $t$.

% Recall that from AC we have in GS model, 
% We will use the following concentration of overlap result.
% \begin{theorem}\cite{Auffinger_2021} \label{thm: concentration of overlap}
%     $\exists \beta'$ s.t. for $\beta \in [0, \beta']$, 
%     \[\E[\inner{(R_{i,j} - Q_{i, j})^2}] = O(\frac{1}{N})\]
% \end{theorem}
\section{Proof of the main theorem}
In this section, we prove the main result of this note.
\begin{theorem}[rewrite of Theorem \ref{thm: main}]
There exists $\beta'$ s.t for $\beta \in [0, \beta']$
    \[\E[\exp(i u \sqrt{N}F_1] = \exp(-\frac{u^2 \nu^2}{2}) + O(\frac{1}{\sqrt{N}})\]
where the centered (normalized) free energy at time $t$ is defined as
    \[N^{\frac{1}{2}}F_t = \log Z_t - \E[\log Z_0] - \frac{\beta^2N}{4}(q^2 - p^2)t\]
and
    \[\nu^2 = \left(\Var(\log W) - \frac{\beta^2 q^2}{2}\right)\]
    where $W = \sum_{\gamma = -S}^S \exp((\beta \sqrt{q}\eta + h)\gamma + \gamma^2 \left(\frac{\beta^2}{2}(p - q) + D\right))$ and $\eta \sim \cc{N}(0, 1)$
\end{theorem}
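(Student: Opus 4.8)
The plan is to compute the characteristic function of the normalized free energy by setting up a differential equation in the interpolation parameter $t$ and integrating it from $0$ to $1$. Define $\phi(t) = \E[\exp(iu\sqrt{N}F_t)]$. The key observation is that $F_1$ is the quantity we care about, while $F_0$ has an explicit distribution coming from the factorized Hamiltonian $H_0$, so computing $\phi(1)$ reduces to understanding $\phi(0)$ plus the accumulated change $\int_0^1 \phi'(t)\,dt$. First I would apply It\^o's lemma to $\log Z_t$, treating the Brownian motions $B_{i,j}[t]$ and the time-reversed process $W_i[t]$ as the sources of randomness. The drift term $-\frac{\beta^2}{2}(p-q)\sum_i \sigma_i^2\,dt$ in $dH_t$ combined with the $-\frac{W_i[t]}{1-t}\,dt$ drift hidden inside $dW_i[t]$ must be tracked carefully, as these are precisely what the centering constant $\frac{\beta^2 N}{4}(q^2-p^2)t$ is designed to cancel.

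Next I would compute the quadratic variation of $\log Z_t$. The forward Brownian motions $B_{i,j}$ contribute a term proportional to $\frac{\beta^2}{N}\sum_{i<j}\langle \sigma_i\sigma_j\rangle_t^2$, which upon symmetrization produces the self-overlap and overlap quantities $R_{1,1}, R_{1,2}$ and the squared analogues $U_{k,l}$; the $W_i$ contributions produce $\beta^2 q \sum_i \langle \sigma_i\rangle_t^2$. Using Proposition \ref{prop:ac21}, I would replace the annealed overlaps $R_{1,2}$ and $R_{1,1}$ by their limiting constants $q$ and $p$, incurring only $O(1/\sqrt N)$ errors controlled by the stated concentration bounds. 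The main subtlety specific to the GS model is the appearance of the $U_{k,l}$ terms, coming from the non-trivial self-overlap $\langle \sigma_i^2\rangle$: I would need a separate concentration statement (or a second-moment estimate) showing $U_{k,l}$ concentrates so that these extra contributions either cancel against the drift or get absorbed into the variance $\nu^2$. After these substitutions, the time derivative $\phi'(t)$ should take the form $\phi'(t) = -\frac{u^2}{2}\big(\text{const}\big)\phi(t) + O(1/\sqrt N)$, so that integrating gives $\phi(1) = \phi(0)\exp(-\frac{u^2}{2}\cdot c) + O(1/\sqrt N)$ for a constant $c$ accounting for the interpolation's contribution to the variance.

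To finish, I would evaluate $\phi(0)$ directly. Since $H_0$ decouples across sites, $\log Z_0 = \sum_i \log\big(\sum_{\gamma=-S}^S \exp((\beta\sqrt q\,\eta_i + h)\gamma + \gamma^2(\frac{\beta^2}{2}(p-q)+D))\big)$ is a sum of i.i.d.\ terms $\log W_i$ with $W_i$ distributed as the $W$ in the statement. The classical CLT then gives that $\sqrt N\big(\frac1N\log Z_0 - \frac1N\E[\log Z_0]\big)$ is asymptotically $\cc N(0, \Var(\log W))$, so $\phi(0) \to \exp(-\frac{u^2}{2}\Var(\log W))$. Combining with the interpolation factor, and checking that the constant $c$ from the integration equals $\frac{\beta^2 q^2}{2}$, yields $\phi(1) = \exp(-\frac{u^2}{2}(\Var(\log W) - \frac{\beta^2 q^2}{2})) + O(1/\sqrt N) = \exp(-\frac{u^2\nu^2}{2}) + O(1/\sqrt N)$, which is the claim.

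The main obstacle I anticipate is the bookkeeping of the quadratic-variation terms arising from the self-overlap, namely controlling the $U_{k,l}$ contributions and verifying they combine correctly with the $(p-q)$ drift so that the final variance is exactly $\nu^2$ and not something with leftover $p$-dependent pieces; this is the genuinely new difficulty relative to the SK case, where self-overlap is identically one. A secondary technical point is ensuring the $O(1/\sqrt N)$ error terms from the overlap concentration (Proposition \ref{prop:ac21}) remain uniformly bounded after being multiplied by $N$ inside the exponent and integrated over $t\in[0,1]$, which requires the high-temperature assumption $\beta \le \beta'$ to keep all Gibbs averages and their fluctuations uniformly controlled.
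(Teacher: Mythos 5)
Your proposal follows essentially the same route as the paper: It\^o's lemma applied to $\log Z_t$ and then to $\exp(iu\sqrt{N}F_t)$, Gaussian integration by parts to handle the time-reversed drift $W_i[t]/(1-t)$ (which is what produces the negative $-\tfrac{\beta^2 q^2}{2}$ correction to the variance), Proposition \ref{prop:ac21} to replace $R_{1,1},R_{1,2}$ by $p,q$ at cost $O(1/\sqrt N)$, and the classical CLT for the site-factorized $\log Z_0$. The one difficulty you single out --- needing a separate concentration estimate for the $U_{k,l}$ terms --- dissolves on inspection: by its definition $U_{k,l}=\frac{1}{N^2}\sum_i(\sigma_i^k)^2(\sigma_i^l)^2\le S^4/N$ deterministically, so these contributions are $O(1/N)$ with no further argument, which is exactly how the paper disposes of them.
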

The first step is to derive $d\log Z_t$ (see Lemma \ref{lem: dlog}). First, let's introduce the following notation to simplify $dH_t$.
\[dX_t(\sigma) = \frac{\beta}{\sqrt{N}} \sum_{i < j}\sigma_i\sigma_j dB_{i, j}[t]\]
and 
\[dY_t(\sigma) =  \beta \sqrt{q} \sum_i \sigma_i dW_i[t]\]
where $dW_i$ is defined as in Lemma \ref{lem: time-rev}. 
Using the above definition,
\[dH_t(\sigma) = dX_t(\sigma) + dY_t(\sigma) - \frac{\beta^2}{2} (p - q) \sum_i \sigma_i^2 dt\]
so that 
\[\E[H_N(\sigma^1)H_N(\sigma^2)] = \frac{\beta^2}{2} N(R^2_{1, 2} - U_{1, 2})\]
When $i = j$, $R_{i, j}$ and $U_{i, j}$ becomes self overlaps.

The quadratic variation of $dH_t$ is given by the lemma below.
\begin{lemma}
    \[d[H(\sigma)]_t = \frac{\beta^2N}{2} (R^2_{1, 2} - U_{1, 2})dt + \beta^2 q N R_{1, 1}dt\]
\end{lemma}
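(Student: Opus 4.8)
The object $d[H(\sigma)]_t$ is a quadratic variation, so the first move is to discard the bounded-variation part of $dH_t$: the drift $-\tfrac{\beta^2}{2}(p-q)\sum_i\sigma_i^2\,dt$ contributes nothing, and only the martingale increments $dX_t(\sigma)$ and $dY_t(\sigma)$ survive. Since the two driving families $\{B_{i,j}\}$ and $\{W_i\}$ are independent by construction, their joint bracket vanishes and the variation splits additively, so the plan is to evaluate
\[
d[H(\sigma)]_t = d[X(\sigma)]_t + d[Y(\sigma)]_t
\]
piece by piece.

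For the $X$-piece, the $B_{i,j}$ are independent with $d[B_{i,j}]_t = dt$, so only the diagonal terms of the double sum survive and $d[X(\sigma)]_t = \tfrac{\beta^2}{N}\sum_{i<j}\sigma_i^2\sigma_j^2\,dt$. I would then symmetrize over the diagonal with the identity $\sum_{i<j}a_i a_j = \tfrac{1}{2}\big[(\sum_i a_i)^2 - \sum_i a_i^2\big]$ taking $a_i=\sigma_i^2$; recognizing $\tfrac{1}{N}\sum_i\sigma_i^2$ as the self-overlap and $\tfrac{1}{N^2}\sum_i\sigma_i^4$ as the associated $U$, this collapses to $\tfrac{\beta^2 N}{2}(R_{1,2}^2 - U_{1,2})\,dt$ evaluated on the diagonal $\sigma^1=\sigma^2=\sigma$. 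This is precisely the diagonal case of the covariance identity $\E[H_N(\sigma^1)H_N(\sigma^2)] = \tfrac{\beta^2}{2}N(R_{1,2}^2-U_{1,2})$ recorded just above, which makes sense since quadratic variation and covariance of a Gaussian field coincide in this setting.

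The only genuinely delicate step is the $Y$-piece, and this is where the time-reversed Brownian motion must be handled with care. Although $W_i$ carries the nonzero drift $-\tfrac{W_i[t]}{1-t}$, its bracket is dictated solely by the driving martingale $B_i$, so $d[W_i]_t = dt$; this is exactly the reading of Lemma~\ref{lem: time-rev} that $W_i[t]=B_i[1-t]$ is still a Brownian motion and hence accrues variation at unit rate. Using independence of the $W_i$ across sites, I would conclude
\[
d[Y(\sigma)]_t = \beta^2 q\sum_i \sigma_i^2\,d[W_i]_t = \beta^2 q\sum_i\sigma_i^2\,dt = \beta^2 q\,N R_{1,1}\,dt,
\]
and adding the two pieces yields the stated identity. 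The hard part is conceptual rather than computational: one must be sure that the drift in the time-reversal SDE does not leak into the bracket, since the heuristic picture of $W_i$ as "running backwards in time" might wrongly suggest a sign flip or a cancelling contribution. Keeping everything at the level of the martingale part of the semimartingale decomposition, and invoking Lemma~\ref{lem: time-rev} for $d[W_i]_t=dt$, is what resolves this cleanly.
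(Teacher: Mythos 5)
Your proof is correct and follows essentially the same route as the paper: split $d[H(\sigma)]_t = d[X(\sigma)]_t + d[Y(\sigma)]_t$ by independence of the two driving families, then compute each bracket directly (the paper's own proof is just this one-line decomposition, so you have in fact supplied the omitted details, including the symmetrization $\sum_{i<j}\sigma_i^2\sigma_j^2 = \tfrac{1}{2}\bigl[(\sum_i\sigma_i^2)^2 - \sum_i\sigma_i^4\bigr]$ and the observation that the drift of $W_i$ does not enter its bracket). Your reading of the statement's $R_{1,2}, U_{1,2}$ as the self-overlap quantities on the diagonal $\sigma^1=\sigma^2=\sigma$ matches the paper's proof, which writes $R_{1,1}, U_{1,1}$ there.
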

\begin{proof}
    Since $B_{i, j}[t]$ and $B_i[t]$ are independent from each other, 
    \begin{align*}
        d[H(\sigma)]_t &= d[X(\sigma)]_t + d[Y(\sigma)]_t\\
        &= \frac{\beta^2N}{2} (R^2_{1, 1} - U_{1, 1})dt + \beta^2 q N R_{1, 1}dt
    \end{align*} 
\end{proof}
Next, we obtain $dZ_t$ and its quadratic variation 
\begin{lemma} \label{lem: partition fn}
    \begin{align*}
        dZ_t &= Z_t \inner{dX_t(\sigma) + dY_t(\sigma)}_t\\
        &+ Z_t \inner{ - \frac{\beta^2}{2} (p - q) \sum_i \sigma_i^2 dt + \frac{\beta^2N}{4} (R^2_{1, 1} - U_{1, 1})dt + \frac{\beta^2N}{2} q R_{1, 1}dt}_t
    \end{align*}
    and
    \begin{align*}
         d[Z]_t &=Z_t^2 \inner{\frac{\beta^2N}{2}(R^2_{1, 2} - U_{1, 2})dt + \beta^2 q N R_{1, 2}}_t
    \end{align*}
\end{lemma}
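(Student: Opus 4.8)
The plan is to obtain both identities from a single application of Itô's formula to the map $H \mapsto e^{H}$, applied termwise to the partition function $Z_t = \sum_\sigma e^{H_t(\sigma)}$, and then to read off the drift, martingale, and quadratic-variation parts separately.

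First I would fix a configuration $\sigma$ and treat $e^{H_t(\sigma)}$ as a smooth function of the Itô process $H_t(\sigma)$. Itô's formula gives
\[d e^{H_t(\sigma)} = e^{H_t(\sigma)}\,dH_t(\sigma) + \tfrac12 e^{H_t(\sigma)}\,d[H(\sigma)]_t.\]
Here $dH_t(\sigma) = dX_t(\sigma) + dY_t(\sigma) - \frac{\beta^2}{2}(p-q)\sum_i\sigma_i^2\,dt$ is given, and $d[H(\sigma)]_t$ is supplied by the preceding lemma, read with self-overlaps $R_{1,1}, U_{1,1}$ since the bracket of a single copy is involved. Summing over $\sigma$ and dividing by $Z_t$ turns the $e^{H_t(\sigma)}$-weighted sum into the Gibbs average $\inner{\cdot}_t$, so that
\[dZ_t = Z_t\inner{dH_t(\sigma) + \tfrac12\, d[H(\sigma)]_t}_t.\]
Keeping $dY_t$ in its raw form (i.e.\ with $dW_i$, drift included) inside the first bracket and collecting the Itô correction $\tfrac12 d[H(\sigma)]_t$ into $\frac{\beta^2 N}{4}(R_{1,1}^2 - U_{1,1})\,dt$ and $\frac{\beta^2 N}{2} q R_{1,1}\,dt$ reproduces the first identity verbatim.

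For the quadratic variation I would isolate the martingale part of $dZ_t$, namely $\sum_\sigma e^{H_t(\sigma)}\bigl(dX_t(\sigma) + dY_t(\sigma)_{\mathrm{mart}}\bigr)$, and note that only the Brownian differentials $dB_{i,j}$ and $dB_i$ contribute; in particular the drift $-\frac{W_i}{1-t}\,dt$ hidden inside $dW_i$ (from Lemma \ref{lem: time-rev}) plays no role in $d[Z]_t$. Since the families $\{B_{i,j}\}$ and $\{B_i\}$ are independent, the cross term between $dX$ and $dY$ vanishes, and I can compute the $X$- and $Y$-contributions separately using $dB_{i,j}\,dB_{k,l} = \delta_{(i,j),(k,l)}\,dt$ and $dB_i\,dB_k = \delta_{ik}\,dt$. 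Writing the resulting double sums over two replicas $\sigma^1,\sigma^2$ as two-replica Gibbs averages (normalized by $Z_t^2$), the only remaining work is the combinatorial identity $\sum_{i<j}\sigma^1_i\sigma^1_j\sigma^2_i\sigma^2_j = \tfrac{N^2}{2}(R_{1,2}^2 - U_{1,2})$ together with $\sum_i \sigma^1_i\sigma^2_i = N R_{1,2}$, which produce exactly $\frac{\beta^2 N}{2}(R_{1,2}^2 - U_{1,2})\,dt$ and $\beta^2 q N R_{1,2}\,dt$.

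I expect the main obstacle to be bookkeeping rather than anything deep: one must carefully use self-overlaps in the single-copy bracket $d[H(\sigma)]_t$ but pairwise overlaps in $d[Z]_t$, distinguishing the single-copy Itô correction from the two-replica covariance, and correctly separate the genuine martingale increments from the time-reversal drift so that the $U_{k,l}$ corrections coming from the non-trivial self-overlap land in the right place. The identity $\sum_{i<j}\sigma^1_i\sigma^1_j\sigma^2_i\sigma^2_j = \tfrac12\bigl((N R_{1,2})^2 - N^2 U_{1,2}\bigr)$ is the one spot where the GS model departs from SK (where the analogue of $U_{1,2}$ is trivial), so I would state it explicitly before substituting.
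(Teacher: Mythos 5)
Your proposal is correct and follows essentially the same route as the paper: Itô's formula applied termwise to $e^{H_t(\sigma)}$, summed over $\sigma$ and normalized into Gibbs averages for the drift identity, and the two-replica cross-variation $\sum_{\sigma,\tau} d[e^{H(\sigma)}, e^{H(\tau)}]_t$ (using independence of the two Brownian families) for $d[Z]_t$. Making the combinatorial identity $\sum_{i<j}\sigma^1_i\sigma^1_j\sigma^2_i\sigma^2_j = \tfrac{N^2}{2}(R_{1,2}^2 - U_{1,2})$ explicit is a small expository improvement over the paper, which states the result without displaying it.
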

\begin{proof}
    Apply Ito's lemma to $f(x) = \exp(x)$ to $H_t(\sigma)$, we have 
    \begin{align*}
        d\exp(H_t(\sigma)) &= \exp(H_t(\sigma)) \left(dH_t(\sigma) + \frac{1}{2}d[H(\sigma)]_t\right)\\
        &= \exp(H_t(\sigma))\left(dX_t(\sigma) + dY_t(\sigma) - \frac{\beta^2}{2} (p - q) \sum_i \sigma_i^2 dt\right)\\
        &+ \exp(H_t(\sigma))\left(\frac{\beta^2N}{4} (R^2_{1, 1} - U_{1, 1})dt + \frac{\beta^2N}{2} q R_{1, 1}dt\right)
    \end{align*}
    Since $Z_t = \sum_{\sigma} \exp(H_t(\sigma))$, 
    \begin{align*}
        dZ_t &= \sum_{\sigma} d\exp(H_t(\sigma))\\
        &= Z_t \inner{dX_t(\sigma) + dY_t(\sigma)}_t\\
        &+ Z_t \inner{ - \frac{\beta^2}{2} (p - q) \sum_i \sigma_i^2 dt + \frac{\beta^2N}{4} (R^2_{1, 1} - U_{1, 1})dt + \frac{\beta^2N}{2} q R_{1, 1}dt}_t
    \end{align*} 
    For the QV, 
    \begin{align*}
        d[Z]_t &= \sum_{\sigma, \tau}d[\exp(H(\sigma)), \exp(H(\tau))]_t\\
        &= Z_t^2 \inner{d[X(\sigma), X(\tau)]_t + d[Y(\sigma), Y(\tau)]}_t\\
        &= Z_t^2 \inner{\frac{\beta^2N}{2}(R^2_{1, 2} - U_{1, 2})dt + \beta^2 q N R_{1, 2}dt}_t
    \end{align*}
\end{proof}
Now, we can write the free energy as 
\[\log Z_1 - \log Z_0 = \int_0^1 d \log Z_t \]
where $\frac{1}{N}\log Z_1$ is the usual free energy. 
\begin{lemma} \label{lem: Free energy 0}
    \[
    \frac{1}{N}\log Z_0 = \log\left(1 +  \sum_{\gamma = 1}^{S} 2\cosh((\beta \sqrt{q}\eta + h)\gamma) \exp(\gamma^2\left(\frac{\beta^2}{2}(p - q) + D\right)) \right)
    \]
\end{lemma}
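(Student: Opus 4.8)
The plan is to exploit the fact that the initial Hamiltonian $H_0$ in \eqref{eq:init Ham} carries no pairwise interaction: the term $\frac{\beta}{\sqrt N}\sum_{i<j} g_{i,j}\sigma_i\sigma_j$ of the original model has been replaced by the decoupled Gaussian field $\beta\sqrt q\sum_i \sigma_i\eta_i$. Consequently $H_0$ is a sum of single-site contributions,
\[
H_0(\sigma) = \sum_i \Bigl[(\beta\sqrt q\,\eta_i + h)\sigma_i + \Bigl(\tfrac{\beta^2}{2}(p-q)+D\Bigr)\sigma_i^2\Bigr],
\]
and so the partition function factorizes over the $N$ coordinates.

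First I would write $Z_0 = \sum_{\sigma\in\Sigma_{S,N}} \exp(H_0(\sigma)) = \prod_{i=1}^N W_i$, where each single-site factor is
\[
W_i = \sum_{\gamma=-S}^{S}\exp\Bigl[(\beta\sqrt q\,\eta_i+h)\gamma + \gamma^2\Bigl(\tfrac{\beta^2}{2}(p-q)+D\Bigr)\Bigr].
\]
This is just the distributive law applied to the product of site-wise exponentials, using that each $\sigma_i$ ranges independently over $\{0,\pm1,\dots,\pm S\}$.

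Next I would simplify $W_i$. The $\gamma=0$ term contributes $1$, and pairing $\gamma$ with $-\gamma$ for $\gamma\ge 1$ collapses the two exponentials into $2\cosh((\beta\sqrt q\,\eta_i+h)\gamma)\exp(\gamma^2(\tfrac{\beta^2}{2}(p-q)+D))$, since the $\gamma^2$ coefficient is even in $\gamma$ while the linear term is odd. This yields exactly the bracketed expression in the statement, with $\eta_i$ playing the role of the generic $\eta$. Taking logs gives $\log Z_0 = \sum_i \log W_i$, so that $\frac1N\log Z_0$ is the empirical average of the i.i.d.\ single-site free energies $\log W_i$, each having the law of $\log W$.

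There is no genuine obstacle here: the whole argument is the factorization plus the $\cosh$ rewriting, both elementary. The only point deserving a word of care is the normalization --- the right-hand side of the statement is the generic single-site term, and the $\frac1N$ on the left is precisely what reduces the product $\prod_i W_i$ to this per-site quantity. Under the disorder expectation one then has $\E[\tfrac1N\log Z_0]=\E[\log W]$, which is the form invoked in Theorem~\ref{thm: main}.
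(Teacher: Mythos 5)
Your proof is correct and takes essentially the same route as the paper's: the partition function at time $0$ factorizes over the independent sites, and pairing $\gamma$ with $-\gamma$ (with the $\gamma=0$ term contributing $1$) produces the $2\cosh$ expression. You are in fact slightly more careful than the paper, which writes a single generic $\eta$ on the right-hand side where the exact identity involves the empirical average of the i.i.d.\ site terms $\log W_i$ with distinct $\eta_i$'s --- a point you correctly flag.
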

\begin{proof}
    At time $0$, the spins are independent of each other, and we can have 
    \begin{align}
        \log Z_0 &= \log \left(\sum_{\sigma} \exp(\beta \sqrt{q} \sum_i \sigma_i \eta_i + \frac{\beta^2}{2}(p - q)\sum_i \sigma_i^2 + D\sum_i \sigma^2_i + h\sum_i \sigma_i)\right)\\
        &= N\log\left(1 +  \sum_{\gamma = 1}^{S} 2\cosh((\beta \sqrt{q}\eta + h)\gamma) \exp(\gamma^2(\frac{\beta^2}{2}(p - q) + D)) \right)
    \end{align}
\end{proof}
\begin{lemma} \label{lem: dlog}
\begin{align*}
    d \log Z_t &=\inner{dX_t(\sigma) + \beta \sqrt{q} \sum_i \sigma_i dB_i[t]}_t\\
                &- \inner{\beta \sqrt{q} \sum_i \sigma_i \frac{W_i[t]}{1 - t}dt}_t\\
                & + \beta^2 q N \inner{R_{1, 1} - R_{1, 2}}_t dt\\
                &+ \frac{\beta^2N}{4} \inner{(R_{1, 1} - p)^2 - (R_{1, 2} - q)^2 + q^2 - p^2 + U_{1, 2} - U_{1, 1}  }_t dt
\end{align*}
\end{lemma}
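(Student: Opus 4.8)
The plan is to apply It\^o's formula for the logarithm to the semimartingale $Z_t$ and then collect terms against the two expressions already computed in Lemma \ref{lem: partition fn}. Since $d\log Z_t = \frac{dZ_t}{Z_t} - \frac12\frac{d[Z]_t}{Z_t^2}$, I would substitute the drift-and-martingale decomposition of $dZ_t$ and the quadratic variation $d[Z]_t$ directly. The martingale part of $\frac{dZ_t}{Z_t}$ is $\inner{dX_t(\sigma) + dY_t(\sigma)}_t$, while the It\^o correction contributes $-\inner{\frac{\beta^2 N}{4}(R_{1,2}^2 - U_{1,2}) + \frac{\beta^2 q N}{2}R_{1,2}}_t\,dt$. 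At this stage $d\log Z_t$ is a sum of the Gibbs-averaged Brownian increment $\inner{dX_t + dY_t}_t$ and a single drift bracket gathering the $\sum_i\sigma_i^2$ term together with all the $R_{1,1}$ and $R_{1,2}$ contributions.

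The second step is to peel the genuine noise off $dY_t$. Recall $dY_t(\sigma) = \beta\sqrt{q}\sum_i \sigma_i\,dW_i[t]$, and by Lemma \ref{lem: time-rev} we have $dW_i[t] = -\frac{W_i[t]}{1-t}\,dt + dB_i[t]$. Substituting this identity splits $\inner{dY_t(\sigma)}_t$ into the Brownian increment $\inner{\beta\sqrt{q}\sum_i\sigma_i\,dB_i[t]}_t$ and the additional drift $-\inner{\beta\sqrt{q}\sum_i\sigma_i\frac{W_i[t]}{1-t}}_t\,dt$. Combined with $\inner{dX_t}_t$, these reproduce verbatim the first two lines of the claimed identity.

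The final step is purely algebraic bookkeeping on the drift. Using $\sum_i\sigma_i^2 = N R_{1,1}$, I would rewrite $-\frac{\beta^2}{2}(p-q)\sum_i\sigma_i^2$ as $-\frac{\beta^2 N}{2}(p-q)R_{1,1}$, factor $\frac{\beta^2 N}{4}$ out of every surviving $dt$-term, and collect. The linear-in-$R_{1,1}$ coefficient becomes $4q-2p$ and the linear-in-$R_{1,2}$ coefficient becomes $-2q$, so the bracket reads $R_{1,1}^2 + (4q-2p)R_{1,1} - R_{1,2}^2 - 2q R_{1,2} + U_{1,2} - U_{1,1}$. Extracting $4q\,R_{1,1} - 4q\,R_{1,2}$ (i.e. the separated term $\beta^2 q N\inner{R_{1,1}-R_{1,2}}_t$) leaves $R_{1,1}^2 - 2p R_{1,1}$ and $-R_{1,2}^2 + 2q R_{1,2}$, which complete the square into $(R_{1,1}-p)^2 - p^2$ and $-(R_{1,2}-q)^2 + q^2$. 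The leftover constants assemble to $q^2 - p^2$, giving precisely the last two lines of the statement.

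I expect no genuine analytic obstacle here; the content is an It\^o computation followed by careful sign and coefficient matching. The only place to be vigilant is the reconciliation of the linear-in-$R$ terms: the $\frac{\beta^2 N}{2}q R_{1,1}$ piece from $dZ_t$ and the $-\frac{\beta^2 q N}{2}R_{1,2}$ correction from $-\frac12 d[Z]_t/Z_t^2$ must be combined with the cross terms released by completing the square to produce the clean coefficient $\beta^2 q N$ on $R_{1,1}-R_{1,2}$. The reason for writing the drift in exactly this completed-square form is forward-looking: it isolates the deviations $(R_{1,1}-p)^2$ and $(R_{1,2}-q)^2$, which Proposition \ref{prop:ac21} controls at order $O(1/N)$, so that they can be discarded in the subsequent characteristic-function estimate.
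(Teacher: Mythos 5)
Your proposal is correct and follows essentially the same route as the paper: apply It\^o's formula for the logarithm using the expressions for $dZ_t$ and $d[Z]_t$ from Lemma \ref{lem: partition fn}, split $dW_i[t]$ into its drift and Brownian parts via Lemma \ref{lem: time-rev}, then separate off $\beta^2 q N\inner{R_{1,1}-R_{1,2}}_t$ and complete the square in the remaining drift. Your coefficient bookkeeping ($4q-2p$ on $R_{1,1}$, $-2q$ on $R_{1,2}$, constants assembling to $q^2-p^2$) matches the paper's computation exactly.
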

\begin{proof}
    From Lemma \ref{lem: partition fn}
\begin{align*}
    d \log Z_t &= \frac{1}{Z_t}dZ_t - \frac{1}{2}\frac{d[Z]_t}{Z_t^2} \\
    % &= \inner{dX_t(\sigma) + \beta \sqrt{q} \sum_i \sigma_i dB_i[t]}_t\\
    % &- \inner{\beta \sqrt{q} \sum_i \sigma_i \frac{W_i[t]}{1 - t}dt}_t\\
    % &+ \inner{ - \frac{\beta^2}{2} (p - q) NR_{1, 1} dt + \frac{\beta^2N}{4} (R^2_{1, 1} - U_{1, 1})dt + \frac{\beta^2N}{2} q R_{1, 1}dt}_t\\
    % &- \frac{1}{2}\inner{\frac{\beta^2N}{2}(R^2_{1, 2} - U_{1, 2})dt + \beta^2 q N R_{1, 2}}_t\\
    &= \inner{dX_t(\sigma) + \beta \sqrt{q} \sum_i \sigma_i dB_i[t]}_t\\
    &- \inner{\beta \sqrt{q} \sum_i \sigma_i \frac{W_i[t]}{1 - t}dt}_t\\
    &+ \inner{ - \frac{\beta^2}{2} (p - q) NR_{1, 1} dt + \frac{\beta^2N}{4} (R^2_{1, 1} - U_{1, 1})dt + \frac{\beta^2N}{2} q R_{1, 1}dt}_t\\
    &- \frac{1}{2}\inner{\frac{\beta^2N}{2}(R^2_{1, 2} - U_{1, 2})dt + \beta^2 q N R_{1, 2}dt}_t
\end{align*}
For the term involving $W_i[t]$, note that $\frac{W_i[t]}{1 - t} \sim \cc{N}(0, 1)$. From Gaussian integration by part, we get
\begin{align} \label{eq: gip}
    \beta\sqrt{q}\E[\sum_i \inner{\sigma_i}_t \frac{W_i[t]}{1 - t}] = \sum_i \beta\sqrt{q}N\E[\partial_{W_i[t]} \inner{\sigma_i}_t] = \beta^2 q N \E[\inner{R_{1, 1} - R_{1, 2}}_t]
\end{align}
Separate this term so that we can normalized $\beta\sqrt{q}\E[\sum_i \inner{\sigma_i}_t \frac{W_i[t]}{1 - t}]$ later, and rearrange other drift terms 
\begin{align*}
    &\beta^2qN \inner{R_{1, 1} - R_{1, 2}}_t dt\\
    &-\frac{\beta^2 N}{2}\inner{(p - q)R_{1, 1}}dt + \frac{\beta^2 N}{4} \inner{R_{1, 1}^2 - U_{1, 1} - 2qR_{1, 1}}_tdt - \frac{\beta^2 N}{4}\inner{R_{1, 2}^2 - U_{1, 1} - 2qR_{1, 2}}_t dt\\
    =& \beta^2qN \inner{R_{1, 1} - R_{1, 2}}_t dt \\
    &+ \frac{\beta^2 N}{4} \inner{R_{1, 1}^2 - U_{1, 1} - 2pR_{1, 1}}_tdt - \frac{\beta^2 N}{4}\inner{R_{1, 2}^2 - U_{1, 1} - 2qR_{1, 2}}_t dt
\end{align*}
% \begin{align*}
%     &-\beta^2 q N \inner{R_{1, 1} - R_{1, 2}}_tdt + \inner{ - \frac{\beta^2}{2} (p - q) NR_{1, 1} dt + \frac{\beta^2N}{4} (R^2_{1, 1} - U_{1, 1})dt + \frac{\beta^2N}{2} q R_{1, 1}dt}_t\\
%     &= \frac{\beta^2 N}{2}\inner{-2qR_{1, 1} + 2qR_{1, 2} - (p - q)R_{1, 1} + \frac{1}{2}(R^2_{1, 1} - U_{1, 1}) + qR_{1, 1}}_tdt\\
%     &= \frac{\beta^2 N}{2}\inner{-pR_{1, 1} + \frac{1}{2}(R^2_{1, 1} - U_{1, 1}) + 2qR_{1, 2}}_tdt\\
%     &= \frac{\beta^2 N}{2}\inner{\frac{1}{2}(R_{1, 1} - p)^2 - \frac{p^2}{2} - \frac{1}{2}U_{1, 1} + 2qR_{1, 2}}_t dt
% \end{align*}
% Move the $R_{1, 2}$ term down
% \begin{align*}
%     &- \frac{1}{2}\inner{\frac{\beta^2N}{2}(R^2_{1, 2} - U_{1, 2}) + \beta^2 q N R_{1, 2} - 2\beta^2N qR_{1, 2}}_tdt\\
%     &= -\frac{\beta^2N}{2}\inner{\frac{1}{2}(R^2_{1, 2} - U_{1, 2}) - qR_{1, 2}}_tdt\\
%     &= -\frac{\beta^2N}{2}\inner{\frac{1}{2}(R_{1, 2} - q)^2 - \frac{q^2}{2} - \frac{1}{2}U_{1, 2})}_tdt
% \end{align*}
% Combine gives 
Rearrange gives
\[\frac{\beta^2N}{4} \inner{(R_{1, 1} - p)^2 - (R_{1, 2} - q)^2 + q^2 - p^2 + U_{1, 2} - U_{1, 1}  }_t dt\]
Plug this back into the expression of $d \log Z_t$ gives 
\begin{align*}
    d \log Z_t &=\inner{dX_t(\sigma) + \beta \sqrt{q} \sum_i \sigma_i dB_i[t]}_t\\
                &- \inner{\beta \sqrt{q} \sum_i \sigma_i \frac{W_i[t]}{1 - t}dt}_t\\
                & + \beta^2 q N \inner{R_{1, 1} - R_{1, 2}}_t dt\\
                &+ \frac{\beta^2N}{4} \inner{(R_{1, 1} - p)^2 - (R_{1, 2} - q)^2 + q^2 - p^2 + U_{1, 2} - U_{1, 1}  }_t dt
\end{align*}
% \textcolor{red}{For SK, there's no (p - q) term, saving a -4q term from the mean of $W_i$ combined with $2q$ gives -2q. we get a $q^2$ from completing the square from $(R_{1, 2} - q)^2$ and a $1$ from $R_{1, 1} - U_{1, 1}$ $U_{1, 1}$ cancels with $U_{1, 2}$}.
\end{proof}
Now we are ready to center the free energy. 
Let $a = \frac{1}{2}$ and
\[N^a F_t := \log Z_t - \E[\log Z_0] - \frac{\beta^2N}{4}(q^2 - p^2)t\]
By the above computation,
\begin{align*}
    N^a(F_t - F_0) &= \int_0^t \inner{dX_t(\sigma) + \beta \sqrt{q} \sum_i \sigma_i dB_i[t]}_t\\
    &- \int_0^t  \inner{\beta \sqrt{q} \sum_i \sigma_i \frac{W_i[t]}{1 - t}dt}_t\\
    & + \int_0^t \beta^2 q N \inner{R_{1, 1} - R_{1, 2}}_t dt]\\
    &+ \int_0^t \frac{\beta^2N}{4} \inner{(R_{1, 1} - p)^2 - (R_{1, 2} - q)^2 + U_{1, 2} - U_{1, 1}  }_t dt
\end{align*}
Now we are ready to prove the main theorem by bounding $\E[\exp(iu F_t)]$
\begin{proof}
    Apply Ito's lemma on $f(x) = \exp(iu x)$ to get 
    \begin{align}
        d \exp(iu F_t) = \exp(iu F_t) \left(iu d F_t - \frac{1}{2}u^2 d [F]_t \right)
    \end{align}
    where, as in the computation of $d[Z]_t$,
    \begin{align*}
    d[F]_t = N^{-2a}\inner{\frac{\beta^2N}{2}(R^2_{1, 2} - U_{1, 2})dt + \beta^2 q N R_{1, 2}dt}_t
    \end{align*}.
   Moreover
    \[\exp(iu F_T) = \exp(iu F_0) + \int_0^T d \exp(iu F_t)\]
    Combine with the definition above, we have
    \begin{align}
       d \exp(iu F_t) & =\exp(iu F_t) \frac{iu}{N^a}\left( \inner{dX_t(\sigma) + \beta \sqrt{q} \sum_i \sigma_i dB_i[t]}_t\right) \label{eq:1}\\ 
       & - \exp(iu F_t) \frac{iu}{N^a} \inner{\beta \sqrt{q} \sum_i \sigma_i \frac{W_i[t]}{1 - t}dt}_t \label{eq:2}\\
       & + \exp(iu F_t) \frac{iu}{N^a}\beta^2 q N \inner{R_{1, 1} - R_{1, 2}}_t dt \label{eq:3}\\
       & + \exp(iu F_t) \frac{iu}{N^a}\frac{\beta^2N}{4} \inner{(R_{1, 1} - p)^2 - (R_{1, 2} - q)^2 + U_{1, 2} - U_{1, 1}  }_t dt \label{eq:4}\\
       &- \exp(iu F_t)\frac{u^2}{2}N^{-2a}\inner{\frac{\beta^2N}{2}(R^2_{1, 2} - U_{1, 2})dt + \beta^2 q N R_{1, 2}dt}_t \label{eq:5}
    \end{align}
    We are interested in the expectation. Since the first term \ref{eq:1} is a martingale, the expectation is $0$. 
    \begin{itemize}
        \item For \ref{eq:2}, we have 
        \begin{align*}
            \E[\exp(iu F_t) \frac{iu}{N^a} \inner{\beta \sqrt{q} \sum_i \sigma_i \frac{W_i[t]}{1 - t}dt}_t] =& \sum_i \E[\partial_{W_i[t]} \left(\exp(iu F_t) \frac{iu}{N^a} \inner{\beta \sqrt{q} \sigma_i dt}_t\right)]\\
            =& \sum_i \E[\partial_{W_i[t]} \left(\exp(iu F_t)\right) \frac{iu}{N^a} \inner{\beta \sqrt{q} \sigma_i dt}_t]\\
            &+ \sum_i \E[\exp(iu F_t)\frac{iu}{N^a} \partial_{W_i[t]}\inner{\beta \sqrt{q} \sigma_i}_t]
        \end{align*}
        The second term cancels with \ref{eq:3} by construction. See \eqref{eq: gip}. The first term becomes 
        \[\sum_i\E[\frac{-u^2}{N^{2a}}\beta^2 q\inner{\sigma_i}_t \exp(iu F_t) \inner{ \sigma_i}_t dt] = \E[\frac{-u^2}{N^{2a}}N\beta^2 q\inner{R_{1, 2}}_t \exp(iu F_t) dt]\]
        Combine gives
        \[\E[\ref{eq:2} + \ref{eq:3}] =\E[\exp(iu F_t)\frac{u^2}{N^{2a}}N\beta^2 q\inner{R_{1, 2}}_t  dt] \]
        \item Combine above formula with \ref{eq:5} gives
        \begin{align*}
            \E[\ref{eq:2} + \ref{eq:3} + \ref{eq:5}]=&-\E\left[\exp(iu F_t) \frac{u^2}{N^{2a}}\frac{\beta^2 N}{4} \inner{R_{1, 2}^2 - U_{1, 2} + 2qR_{1, 2} - 4qR_{1, 2}}_t dt\right]\\
            = &  -\E\left[\exp(iu F_t) \frac{u^2}{N^{2a}}\frac{\beta^2 N}{4} \inner{(R_{1, 2}-q)^2 - U_{1, 2}-q^2}_t dt\right]
        \end{align*}
        % \begin{align*}
        %     &\E[\exp(iu F_t)\frac{u^2}{N^{2a}} \beta^2N \left(q\inner{R_{1, 2}}_t  - \frac{1}{2}\inner{\frac{1}{2}(R^2_{1, 2} - U_{1, 2})dt + q  R_{1, 2}}_t\right)dt]\\
        %     &=  \E[\exp(iu F_t)\frac{u^2}{N^{2a}} \beta^2N \left(- \frac{1}{2}\inner{\frac{1}{2}(R^2_{1, 2} - U_{1, 2})dt - q  R_{1, 2}}_t\right)dt]\\
        %     &= \E[\exp(iu F_t)\frac{u^2}{4N^{2a}} \beta^2N \left(- \inner{(R^2_{1, 2} - U_{1, 2})dt - 2q  R_{1, 2}}_t\right)dt]\\
        %     &=\E[\exp(iu F_t)\frac{u^2}{4N^{2a}} \beta^2N \left(- \inner{(R^2_{1, 2} - q)^2 - q^2 - U_{1, 2}}_t\right)dt]
        % \end{align*}    
    \end{itemize}
    Recall that $a = \frac{1}{2}$, by Proposition \ref{prop:ac21} and definition of $U_{1, 2}$, $U_{1, 1}$, 
    \[\E[\inner{(R_{i, j} - Q_{i, j})^2}] = O(\frac{1}{N}) \quad U_{i, j} = O(\frac{1}{N})\],
   this allows us to bound $\E[\exp(iuF_T)] -\E[\exp(iuF_0)]$ by the following 
    \begin{align} \label{eq: final deri}
        \E[\int_0^T d\exp(iuF_t)] = \E[\int_0^T \exp(iuF_t) \frac{u^2\beta^2}{4}q^2dt] + O(\frac{1}{\sqrt{N}})
    \end{align}
    What's left to show is $\E[\exp(iuF_0)]$. By definition, 
    \[F_0 = \frac{1}{\sqrt{N}} \left(\log Z_0 - \E[\log Z_0]\right)\]
    Let $W_1, \cdots W_N$ be i.i.d. samples from the same distribution as $W = \sum_{\gamma = -S}^S \exp((\beta \sqrt{q}\eta + h) \gamma + \gamma^2 \left(\frac{\beta^2}{2}(p - q) + D\right))$ where $\eta \sim \cc{N}(0, 1)$, then 
    \[F_0 = \sqrt{N}\frac{\sum_i \left(\log W_i -  \E[\log W]\right)}{N} \]
    By the central limit theorem, we have 
    \[\E[\exp(iu F_0)] = \exp(-\frac{u^2\Var(\log W)}{2}) + O(\frac{1}{\sqrt{N}})\]
    % This implies $f(t) = \E[\exp(iuF_t)]$ is in the form of 
    % \[f(T) - f(0) = \int c f(t) dt\]
    % or $f(t) = \exp(a + ct)$ for some constant $a, c$ independent of $t$.
    Combine with \eqref{eq: final deri} gives
    \[\E[\exp(iuF_1)] = \exp(-\frac{u^2}{2} \left(\Var(\log W) - \frac{q^2\beta^2}{2}\right)) + O(\frac{1}{\sqrt{N}})\]
\end{proof}

\bibliographystyle{alpha}
\bibliography{GS}

\begin{thebibliography}{Tin05}

\bibitem[AC21]{AC21}
Antonio Auffinger and Cathy~Xi Chen.
\newblock Thouless-{A}nderson-{P}almer equations for the {G}hatak-{S}herrington mean field spin glass model.
\newblock {\em J. Stat. Phys.}, 184(2):Paper No. 22, 25, 2021.

\bibitem[CN95]{comets95}
F.~Comets and J.~Neveu.
\newblock {The Sherrington-Kirkpatrick model of spin glasses and stochastic calculus: the high temperature case}.
\newblock {\em Communications in Mathematical Physics}, 166(3):549 -- 564, 1995.

\bibitem[IW81]{IW81}
Nobuyuki Ikeda and Shinzo Watanabe, editors.
\newblock {\em Stochastic Differential Equations and Diffusion Processes}, volume~24 of {\em North-Holland Mathematical Library}.
\newblock Elsevier, 1981.

\bibitem[Tal11]{Talagrand2011}
Michel Talagrand.
\newblock {\em The Sherrington-Kirkpatrick Model}, pages 1--149.
\newblock Springer Berlin Heidelberg, Berlin, Heidelberg, 2011.

\bibitem[Tin05]{Tindel_2005}
Samy Tindel.
\newblock On the stochastic calculus method for spins systems.
\newblock {\em The Annals of Probability}, 33(2), March 2005.

\end{thebibliography}
\end{document}